\newcommand{\C}{\mathbb{C}}
\newcommand{\N}{\mathbb{N}}
\newcommand{\lpn}{\ell^{(p_n)}}
\newcommand{\topn}{^{p_n}}
\title[]{Isometric Operators on Variable-Exponent Discrete Lebesgue Spaces}
\author{Philip M. Gipson}
\address{Department of Mathematics \\ State University of New York College at Cortland \\ Cortland, NY 13045-0900}
\email{philip.gipson@cortland.edu}
\subjclass[2010]{46B04,47B37,47C05}
\keywords{Lebesgue Spaces, Operators, Shifts}
\begin{document}
\begin{abstract}
We investigate the structure of norm-preserving and linear but not necessarily surjective operators on variable-exponent, discrete Lebesgue spaces. A certain class of isometries, novel to this work, are especially considered; this class completely coincides with all isometries when the Lebesgue space is classical, i.e.\ of a fixed-exponent. For said isometries it is shown that their actions are completely determined by pairs consisting of set-mappings and bounded functions on $\N$. This result recovers the previously-known structure of isometries on fixed-exponent spaces as a special case. In the second part, we show that another wide class of operators, including shift operators, are only isometric under very restrictive conditions on the exponent sequence. Together these results serve to highlight the striking similarities and yet radical differences between isometric operators on fixed- and variable-exponent spaces.
\end{abstract}
\maketitle

\section{Introduction}
The structure theory of isometric operators on the discrete Lebesgue spaces $\ell^p$ has been completely understood for some time. For $p=2$, the well-known Wold decomposition provides a thorough framework for understanding isometric operators on Hilbert space. For $p\not=2$, the work of Lamperti \cite{lamperti} provides a total description of isometric operators on general function spaces, including the Lebesgue spaces as specific examples. 

The variable-exponent Lebesgue spaces, denoted here by $\lpn$, form one natural generalization of the spaces $\ell^p$. The technical details for constructing these spaces will be left for Section \ref{prelims}, but the core idea is quite simple (and immediately conveyed by their name): allow the exponent used for norm-calculations to vary across orthogonal subspaces.

Because the $\ell^p$ spaces, and their norms in particular, have different behavior for $p>2$, $p<2$ and $p=2$, we might expect that variable-exponent spaces whose exponent sequence takes values in $[1,2)$ will exhibit different properties than those whose exponent sequence has values in $(2,\infty)$, or those whose exponents could take values in both ranges. Because of this, we will narrow our investigation to only those variable-exponent spaces $\lpn$ with $1\leq p_n<2$ for all $n\in\N$ or with $p_n>2$ for all $n\in\N$. In the latter case we will not necessarily assume that the exponent sequence is bounded.

Our primary result in the first part is Theorem \ref{mylampthm}. It gives a general structure theorem for ``isomodular" operators (a class of isometries, see Definition \ref{isomodulardef}) on $\lpn$ spaces whose exponent sequence satisfies either of the restrictions discussed above. This provides a strong similarity with the theory of isometric operators on classical Lebesgue spaces $\ell^p$, $p\not=2$.

\setcounter{theorem}{5}
\begin{theorem} If $(p_n)$ is such that either $p_n\in[1,2)$ for all $n\in\N$ or $p_n\in(2,\infty)$ for all $n\in\N$, and if $S:\lpn\to\lpn$ is an isomodular operator then there exists a regular set isomorphism $T$ and a function $h\in\ell^\infty$ such that
\[(Sx)_n=h(n)(Tx)_n\]
for all $x\in\lpn$ and $n\in\N$. Furthermore, $|h(n)|\leq1$ for all $n\in\N$.
\end{theorem}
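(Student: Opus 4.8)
The plan is to follow the classical Banach--Lamperti strategy, whose engine is the fact that a norm-preserving map must preserve disjointness of supports; once that is established the operator is forced into a weighted-composition form. I would first observe that an isomodular operator is automatically isometric: writing $\rho$ for the modular of $\lpn$, homogeneity of $S$ gives $\rho(Sx/\lambda)=\rho(S(x/\lambda))=\rho(x/\lambda)$ for every $\lambda>0$, so the Luxemburg norms of $Sx$ and $x$ coincide. In particular $S$ is continuous, and since the finitely supported sequences are dense it suffices to determine $S$ on the canonical basis $\{e_n\}$ and extend by continuity; by the definition of an isomodular operator, $\rho(Se_n)=\rho(e_n)=1$ for every $n$.

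The heart of the argument is the claim that if $x,y\in\lpn$ have disjoint supports then so do $Sx$ and $Sy$. Setting $u=Sx$ and $v=Sy$ and using linearity, isomodularity, and the additivity of $\rho$ on disjointly supported sequences, I would compute
\[\rho(u+v)+\rho(u-v)=\rho(S(x+y))+\rho(S(x-y))=\rho(x+y)+\rho(x-y)=2\rho(x)+2\rho(y)=2\rho(u)+2\rho(v).\]
Read coordinatewise, this says $\sum_n\big[\,|u_n+v_n|^{p_n}+|u_n-v_n|^{p_n}-2|u_n|^{p_n}-2|v_n|^{p_n}\,\big]=0$. The key ingredient is the pointwise Clarkson-type inequality $|a+b|^{p}+|a-b|^{p}\lessgtr 2(|a|^{p}+|b|^{p})$, valid with a single direction ($\le$ for $p\in[1,2)$, $\ge$ for $p\in(2,\infty)$) and with equality if and only if $ab=0$. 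This is precisely where the hypothesis that all $p_n$ lie on one side of $2$ is indispensable: it forces every bracketed term to share the same sign, so a vanishing sum forces each term to vanish, whence $u_nv_n=0$ for all $n$. Establishing this pointwise inequality together with its sharp equality clause uniformly over the interval (the endpoint $p=1$ and the complex-scalar case needing separate but elementary verification) is the main obstacle, and the exclusion $p_n\neq2$ is essential, since at $p=2$ the inequality degenerates to the parallelogram identity and conveys no information.

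With disjointness in hand I would build the set map by declaring $T(\{k\}):=\operatorname{supp}(Se_k)$; disjointness preservation makes these sets pairwise disjoint, so $T(A)=\bigcup_{k\in A}T(\{k\})$ is a well-defined Boolean homomorphism on $\PN$, and I would verify against the paper's definition that it is in fact a regular set isomorphism whose induced composition operator satisfies $(Tx)_n=x_k$ whenever $n\in T(\{k\})$. Defining $h(n):=(Se_k)_n$ for the unique $k$ with $n\in T(\{k\})$ (and $h(n)=0$ otherwise), disjointness guarantees $h$ is well defined. For finitely supported $x=\sum_k x_k e_k$ one then has, at a coordinate $n\in T(\{k\})$, $(Sx)_n=x_k(Se_k)_n=h(n)x_k=h(n)(Tx)_n$, and this identity propagates to all of $\lpn$ by continuity. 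Finally $|h(n)|^{p_n}=|(Se_k)_n|^{p_n}\le\rho(Se_k)=1$ gives $|h(n)|\le1$, whence $h\in\ell^\infty$. I would close by remarking that when $p_n\equiv p\neq2$ is constant the isomodular and isometric conditions coincide, so the statement recovers the classical Lamperti description as a special case.
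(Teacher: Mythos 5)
Your proposal is correct and follows essentially the same route as the paper: isomodularity implies isometry, the one-sided pointwise Clarkson inequality forces preservation of disjoint supports, the sets $\operatorname{supp}(Se_k)$ define the regular set isomorphism $T$, the weight $h$ is assembled from the values of the $Se_k$, and the identity $(Sx)_n=h(n)(Tx)_n$ is checked on finitely supported sequences and extended by density and continuity. The only (harmless) deviation is cosmetic: you bound $|h(n)|$ via the modular, $|h(n)|^{p_n}\le\varrho(Se_k)=1$, whereas the paper uses the norm estimate $|(Se_k)_n|\le\|Se_k\|=1$; both are valid.
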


In the second part we consider linear operators $S_\theta$ on variable-exponent spaces $\lpn$ which arise from measure-preserving actions $\theta$ on $\N$. This class of operators is of special interest because it contains the unilateral shift. Our primary result in this part is Theorem \ref{shiftisometry} and it gives a complete characterization of when such operators are isometric.

\setcounter{theorem}{6}
\begin{theorem} For given $\theta$ and $(p_n)$, the operator $S_\theta$ restricts to an isometry on $\lpn$ if and only if $p_n=p_{\theta(n)}$ for every $n\in\N$.
\end{theorem}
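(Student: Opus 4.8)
The plan is to reduce the isometry condition to an equality of two variable-exponent norms and then to analyze that equality on two-dimensional coordinate subspaces. Recall that the norm on $\lpn$ is the Luxemburg norm attached to the modular $\rho_{(p_n)}(x)=\sum_n |x_n|^{p_n}$, and that $S_\theta$ acts by relabelling coordinates through $\theta$, say $S_\theta e_n = e_{\theta(n)}$ (with $\theta$ injective, as a counting-measure-preserving map must be). The first step is the computation that, for every $\lambda>0$,
\[ \rho_{(p_n)}(S_\theta x/\lambda)=\sum_n |x_n/\lambda|^{p_{\theta(n)}}=\rho_{(q_n)}(x/\lambda), \qquad q_n:=p_{\theta(n)}. \]
Taking infima over $\lambda$ in the definition of the Luxemburg norm gives $\|S_\theta x\|_{(p_n)}=\|x\|_{(q_n)}$ for all $x$, where $\|\cdot\|_{(q_n)}$ denotes the norm attached to the exponent sequence $(q_n)$. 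Hence $S_\theta$ is an isometry on $\lpn$ precisely when the two norms $\|\cdot\|_{(p_n)}$ and $\|\cdot\|_{(q_n)}$ coincide on the whole space.

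The sufficiency direction is then immediate: if $p_n=p_{\theta(n)}$ for every $n$ then $q_n=p_n$, the two modulars are identical, and $S_\theta$ preserves the norm.

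For necessity I would argue by contraposition. Suppose $p_k\neq p_{\theta(k)}=q_k$ for some index $k$, fix any $j\neq k$ (possible since $\N$ is infinite), and restrict attention to vectors $x=a e_k+b e_j$ with $a,b\ge 0$. Since $\lambda\mapsto\rho(x/\lambda)$ is strictly decreasing on the support of a nonzero $x$, a vector with $\rho=1$ has Luxemburg norm exactly $1$, so the unit spheres of the two norms inside this plane are precisely the curves $a^{p_k}+b^{p_j}=1$ and $a^{q_k}+b^{q_j}=1$. Equality of the norms forces these curves to coincide; solving the first for $b$ and substituting into the second yields the identity $a^{q_k}+(1-a^{p_k})^{q_j/p_j}=1$ for all $a\in[0,1]$, which after the substitution $u=a^{p_k}$ becomes
\[ u^{\alpha}+(1-u)^{\beta}=1 \quad\text{for all } u\in[0,1], \qquad \alpha:=q_k/p_k,\ \beta:=q_j/p_j. \]

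The crux of the proof, and the step I expect to be the main obstacle, is showing that this functional identity forces $\alpha=1$ (equivalently $p_k=q_k$), contradicting $p_k\neq q_k$. I would obtain this by differentiating, $\alpha u^{\alpha-1}=\beta(1-u)^{\beta-1}$, and letting $u\to 0^+$: the right side tends to the finite positive value $\beta$, while if $\alpha<1$ the left side blows up and if $\alpha>1$ the left side tends to $0$, forcing the impossible $\beta=0$; hence $\alpha=1$. This contradiction completes the necessity direction and with it the characterization. The only care needed is to make the endpoint limit rigorous and to confirm that the injectivity of $\theta$ assumed in the modular computation is guaranteed by whatever precise sense of ``measure-preserving action'' is fixed in Section \ref{prelims}.
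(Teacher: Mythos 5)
Your proposal is correct, and in the decisive (necessity) step it takes a genuinely different route from the paper. Both arguments restrict to two-dimensional coordinate sections and both exploit the fact that a nonzero finitely supported vector has Luxemburg norm $1$ exactly when its modular equals $1$. But the paper tests the isometry on a \emph{single} unit vector, $b=2^{-1/p_j}e_j+2^{-1/p_{\theta(j)}}e_{\theta(j)}$ (the point corresponding to $u=\tfrac12$ in your parametrization), obtains the one scalar equation $2^{-p_{\theta(j)}/p_j}+2^{-p_{\theta(\theta(j))}/p_{\theta(j)}}=1$, and then concludes by asserting that $2^x+2^y=1$ has the unique real solution $x=y=-1$. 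That assertion is false (for any $x<0$ one may take $y=\log_2(1-2^x)$), and a single equation in the three exponents $p_j$, $p_{\theta(j)}$, $p_{\theta(\theta(j))}$ cannot by itself force them to be equal, so the paper's necessity argument as written has a gap. Your version instead runs over the whole one-parameter family of unit vectors $ae_k+be_j$, producing the functional identity $u^{\alpha}+(1-u)^{\beta}=1$ for all $u\in[0,1]$, and your differentiation-plus-limit argument (differentiation is valid on $(0,1)$, and the endpoint limit $u\to0^+$ is elementary) does legitimately force $\alpha=1$; this family-of-test-vectors idea is exactly what is needed to make the necessity direction airtight, and it is what your approach buys at the cost of slightly more computation. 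Your preliminary reduction $\|S_\theta x\|_{(p_n)}=\|x\|_{(q_n)}$ with $q_n:=p_{\theta(n)}$ is likewise a clean reformulation not made explicit in the paper, and it renders sufficiency immediate (the paper's sufficiency half, mislabeled ``necessity'' there, is left incomplete in the text, though trivial in substance). Two small points of care: phrase the norm comparison on finitely supported vectors, which is all your argument uses, since for general $x\in\lpn$ the quantity $\|x\|_{(q_n)}$ could be infinite; and the injectivity of $\theta$ you rely on is indeed built into the paper's definition of $S_\theta$, so the modular computation is safe.
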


\setcounter{section}{1}
\setcounter{theorem}{0}

\section{Background}

In this section we will provide the background required to understand Lamperti's result for $\ell^p$ spaces, as it provides the inspiration for our investigation of the variable-exponent Lebesgue spaces $\lpn$. We will then provide a very specialized and narrow formulation of the theory of semimodular function spaces as they pertain to variable-exponent Lebesgue spaces.

\subsection{Isometries on $\ell^p$, $p\not=2$}
We will use the notation $\mathcal{P}(\N)$ to denote the powerset of $\N$. The following definition is given in full in \cite[\S 3]{lamperti} and is here restated only for the set $\N$ with the counting measure.
\begin{definition}\label{regsetiso} A \emph{regular set isomorphism} (of $\N$) is a mapping $T:\mathcal{P}(\N)\to\mathcal{P}(\N)$ satisfying:
\begin{enumerate}
\item $T(\N-A)=T\N-TA$
\item $T\left(\bigcup_{n=1}^\infty A_n\right)=\bigcup_{n=1}^\infty TA_n$
\item $TA=\emptyset\text{ if and only if }A=\emptyset$
\end{enumerate}
for all $A,A_n\in\mathcal{P}(\N)$. 
\end{definition}
Because of condition (ii), it is clear that there is a one-to-one correspondence between regular set isomorphisms $T$ on the one hand and denumerable collections of pairwise-disjoint subsets of $\N$ on the other. Explicitly, if $A_1,A_2,...\in\mathcal{P}(\N)$ is a denumerable and pairwise-disjoint collection then $T:\{n\}\mapsto A_n$ extends to a regular set isomorphism in the natural manner.

Given a regular set isomorphism $T$ and an element $a\in\ell^p$ we define $Ta\in\ell^\infty$ by 
$$(Ta)_n=\begin{cases} a_k &\text{ when }n\in T\{k\} \\ 0 &\text{ otherwise }\end{cases}.$$
Note that unless the cardinalities of $T\{k\}$, $k\in\N$, are finite and uniformly bounded then $Ta$ is not guaranteed to be in $\ell^p$ itself. 

\begin{theorem}\cite[Theorem 3.1, paraphrased]{lamperti}\label{oglamperti}
Let $U$ be a linear operator on $\ell^p$, where $p$ is a positive real number not equal to 2, such that
\[||Ua||=||a||\text{   for all }a\in\ell^p\]
(the norm is the usual $p$-norm). Then there exists a regular set isomorphism $T$ and an $h\in\ell^\infty$ such that
$$Ua=hTa.$$
\end{theorem}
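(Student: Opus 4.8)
The plan is to exploit a norm-equality that detects disjointness of supports and is available precisely because $p\neq 2$. Everything rests on a scalar fact: for real $p\neq 2$ the function $\phi(s,t):=|s+t|^p+|s-t|^p-2|s|^p-2|t|^p$ is strictly positive when $p>2$ (and strictly negative when $0<p<2$) for all scalars with $st\neq 0$, and vanishes as soon as $st=0$. Since $\|x\|^p=\sum_k|x_k|^p$ is additive over coordinates for every $p>0$, summing $\phi$ over coordinates upgrades this to the equivalence
$$\|a+b\|^p+\|a-b\|^p=2\|a\|^p+2\|b\|^p \iff a\text{ and }b\text{ are disjointly supported},$$
the point being that a sum of strictly same-signed terms can vanish only if each term does. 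Proving the strictness of $\phi$ away from $st=0$ — exactly the feature that collapses at $p=2$, where $\phi\equiv 0$ — is the analytic heart of the argument and the step I expect to be the main obstacle; it reduces to a convexity and strict-monotonicity analysis of $\phi$, with an extra reduction to handle complex scalars.

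Granting the disjointness criterion, I would next show that $U$ preserves disjointness. Because $U$ is linear and isometric, $\|U(a\pm b)\|=\|a\pm b\|$ while $\|Ua\|=\|a\|$ and $\|Ub\|=\|b\|$; testing the equality above on the pair $Ua,Ub$ and substituting these identities shows that its two sides reduce to $\|a+b\|^p+\|a-b\|^p$ and $2\|a\|^p+2\|b\|^p$. Hence $Ua,Ub$ are disjointly supported if and only if $a,b$ are. Applying the forward direction to the standard unit vectors $e_n$ (with a $1$ in position $n$), the images $Ue_n$ have pairwise disjoint supports, and each is nonzero since $\|Ue_n\|=\|e_n\|=1$. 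I would therefore \emph{define} $T\{n\}:=\operatorname{supp}(Ue_n)$ and extend by $TA:=\bigcup_{n\in A}T\{n\}$. Pairwise disjointness and nonemptiness give axioms (ii) and (iii) of Definition \ref{regsetiso} at once, while axiom (i) follows because, the $T\{n\}$ being disjoint, $T(\N-A)=\bigcup_{n\notin A}T\{n\}=T\N-TA$.

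Finally I would read off $h$. Writing $Ue_n=\sum_{k\in T\{n\}}c_{n,k}e_k$ and using continuity of $U$ to expand $Ua=\sum_n a_n Ue_n$ coordinatewise, the disjointness of the supports gives, for $k\in T\{n\}$, the identity $(Ua)_k=a_n c_{n,k}$, while $(Ua)_k=0$ for every $k$ lying in no $T\{n\}$. Setting $h(k):=c_{n,k}$ for the unique $n$ with $k\in T\{n\}$ (and $h(k):=0$ otherwise) yields exactly $(Ua)_k=h(k)(Ta)_k$, i.e.\ $Ua=hTa$ in the notation of the excerpt. The membership $h\in\ell^\infty$ — in fact $\|h\|_\infty\leq 1$ — comes for free from $1=\|e_n\|^p=\|Ue_n\|^p=\sum_{k\in T\{n\}}|c_{n,k}|^p$, which forces $|h(k)|\leq 1$ at every coordinate. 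Note that surjectivity of $U$ is never invoked, so the construction applies verbatim to the not-necessarily-onto isometries that motivate the present paper.
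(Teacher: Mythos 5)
Your proposal is correct and is essentially Lamperti's own argument, which the paper quotes without proof: the Clarkson-type equality detecting disjoint supports, the resulting preservation of disjointness under the isometry, the supports of the $Ue_n$ generating the regular set isomorphism, and $h$ assembled from the images of the basis vectors. This is also exactly the skeleton the paper itself follows in Section 3 (Propositions \ref{ortho} ff.\ and Theorem \ref{mylampthm}) when adapting the result to isomodular operators on $\lpn$, down to the observation that $|h(k)|\leq 1$ follows from $\|Ue_n\|=1$ and disjointness of supports.
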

We have omitted a second portion of the Theorem statement concerning certain measure-theoretic properties of $h$ which are not of interest in the $\ell^p$ formulation.

A key part of Lamperti's proof is the observation that an isometry $U$ will preserve orthogonality relations and, consequently, may be used to construct a collection of disjoint support sets which will define the regular set isomorphism. We will make use of a similar technique in our main result. 

\subsection{Variable Exponent Spaces}\label{prelims}
For our understanding of the variable exponent spaces $\lpn$ we will rely on the theory of \emph{modular spaces}. Our reference for this topic will be the excellent book \cite{diening} by Diening et al.

For any set $A$, the set of all possible $A$-valued functions on $\N$ will be denoted $A^\infty$. An element $p\in A^\infty$ will be referred to as a \emph{$A$-valued sequence} and we will use the common notations $p_n:=p(n)$ and $(p_n):=p$.

Let $p\in[1,\infty)^\infty$ (a $[1,\infty)$-valued sequence) be given and define
\[\lpn:=\left\{a\in\C^\infty\ :\ \sum_{n\in\N}|a_n|^{p_n}<\infty\right\}\]
It is nontrivial to determine that this is a complex vector space \cite[Chapter 2]{diening}. The sequences $e_k$, $k\in\N$, defined by $(e_k)_n=\delta_{kn}$ (here $\delta$ is the Kronecker delta) form a basis set for $\lpn$. 

The norm on $\lpn$ is given by
\[||a||:=\inf\left\{\lambda>0\ :\ \sum_{n\in\N}\left|\frac{a_n}{\lambda}\right|^{p_n} \leq 1\right\}.\]

In addition to the norm, we will define a \emph{modular} on $\lpn$, denoted $\varrho$, by
\[\varrho(a):=\sum_{n\in\N}|a_n|^{p_n}.\]

In the case when $p$ is a constant sequence we have that $\lpn$ is precisely the classical Lebesgue space $\ell^p$ and the modular and norm have the simple relationship $||a||^p=\varrho(a)$. 

In general, the modular and norm have only the relationship $$||a||=\inf\left\{\lambda>0\ :\ \varrho\left(\frac{a}\lambda\right)\leq 1\right\}.$$

In this more complicated situation it is not immediately clear whether norm-preserving, i.e.\ isometric, operators must necessarily preserve the modular. To that end, we offer the following definition:

\begin{definition}\label{isomodulardef} A linear operator $S:\lpn \to\lpn$ will be called \emph{isomodular} if $\varrho(Sa)=\varrho(a)$ for all $a\in\lpn$.
\end{definition}
Our first observation is that being isomodular is a stronger condition on an operator than being isometric.

\begin{proposition}\label{isomodimpliesisomet} If $S:\lpn \to\lpn$ is an isomodular operator then it is isometric.
\end{proposition}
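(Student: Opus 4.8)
The plan is to exploit the fact that the norm on $\lpn$ is defined entirely in terms of the modular $\varrho$, so that equality of modulars along an entire ray of scalar rescalings forces equality of norms. Concretely, for a fixed $a\in\lpn$ I would compare the two sets
\[
\Lambda_a=\{\lambda>0:\varrho(a/\lambda)\le 1\},\qquad \Lambda_{Sa}=\{\lambda>0:\varrho(Sa/\lambda)\le 1\},
\]
whose infima are by definition $\|a\|$ and $\|Sa\|$ respectively (note that $Sa\in\lpn$ since $S$ maps into $\lpn$, so both quantities are well defined), and show that these two sets coincide.

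The crucial step is a scaling observation. Since $S$ is linear, $Sa/\lambda = S(a/\lambda)$ for every $\lambda>0$. The isomodular hypothesis is a statement about \emph{every} element of $\lpn$, so in particular it applies to the vector $a/\lambda$, yielding $\varrho\bigl(S(a/\lambda)\bigr)=\varrho(a/\lambda)$. Chaining these two facts gives the pointwise-in-$\lambda$ identity $\varrho(Sa/\lambda)=\varrho(a/\lambda)$ valid for all $\lambda>0$.

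Consequently $\Lambda_a=\Lambda_{Sa}$ as subsets of $(0,\infty)$, and taking infima yields $\|Sa\|=\|a\|$; since $a$ was arbitrary, $S$ is isometric. The only point requiring care — and the place where a careless argument could break down — is recognizing that isomodularity must be invoked on the whole ray $\{a/\lambda:\lambda>0\}$ and not merely on $a$ itself. It is precisely the linearity of $S$ that licenses the identity $Sa/\lambda=S(a/\lambda)$ and thereby makes the equality of modulars available for each $\lambda$. I expect no genuine obstacle beyond this bookkeeping: no completeness or topological properties of $\lpn$ are needed, and the hypothesis restricting the range of the exponent sequence $(p_n)$ plays no role in this particular implication.
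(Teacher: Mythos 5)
Your proof is correct and is essentially identical to the paper's: both apply the isomodular hypothesis to the rescaled vector $a/\lambda$, use linearity to write $S(a/\lambda)=\tfrac{Sa}{\lambda}$, and conclude that the sets of admissible $\lambda$ defining $\|a\|$ and $\|Sa\|$ coincide, so their infima agree. Your explicit flagging of the need to invoke isomodularity along the whole ray is exactly the (implicit) heart of the paper's one-line argument.
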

\begin{proof} For $a\in\lpn$ and $\lambda >0$ we have $\varrho\left(\frac{a}\lambda\right)=\varrho\left(S(\frac{a}\lambda)\right)=\varrho\left(\frac{Sa}\lambda\right)$. Thus $\varrho\left(\frac{a}\lambda\right)\leq 1$ if and only if $\varrho\left(\frac{Sa}\lambda\right)\leq 1$. And hence
$$||Sa||=\inf\left\{\lambda>0:\varrho\left(\frac{Sa}\lambda\right)\leq 1\right\}=\inf\left\{\lambda>0:\varrho\left(\frac{a}\lambda \right)\leq 1\right\}=||a||$$
as desired.
\end{proof}
The converse statement is true in the special case of constant $(p_n)$, i.e.\ for classical $\ell^p$ spaces. This is a direct consequence of the simpler relation, $\varrho(a)=||a||^p$, between modular and norm which occurs in such spaces. It is currently unknown to us whether isometric operators are necessarily isomodular in general.

\section{Isomodulars on Spaces with Restricted Exponents}

Throughout this section we will assume that $(p_n)$ is such that either $p_n\in[1,2)$ for all $n\in\N$ or $p_n\in(2,\infty)$ for all $n\in\N$. The motivations for such a restriction on the exponent sequence are the classical Clarkson inequalities \cite{clarkson} for the usual (fixed-exponent) Lebesgue spaces $\ell^p$. The Clarkson inequalities quoted below are in an expanded form due to Lamperti \cite[Corollary 2.1]{lamperti}.

\begin{proposition} For $f,g\in \ell^p$ with $p>2$ we have $||f+g||^p+||f-g||^p\geq2||f||^p+2||g||^p$. If $p<2$ then the reverse inequality is true. In either case, equality occurs if and only if $fg=0$ almost everywhere.
\end{proposition}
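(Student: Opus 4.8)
The plan is to reduce both $\ell^p$-inequalities to a single pointwise (scalar) inequality and then to establish that scalar inequality by a two-step comparison. Since the $p$-th power of the norm on $\ell^p$ is the unweighted sum $\|f\|^p=\sum_n|f_n|^p$, both sides decompose coordinatewise:
\[
\|f+g\|^p+\|f-g\|^p=\sum_{n}\left(|f_n+g_n|^p+|f_n-g_n|^p\right),\qquad 2\|f\|^p+2\|g\|^p=\sum_n\left(2|f_n|^p+2|g_n|^p\right).
\]
Hence it suffices to prove, for all $a,b\in\C$, the scalar inequality $|a+b|^p+|a-b|^p\geq 2|a|^p+2|b|^p$ when $p>2$ (with the reverse when $p<2$), together with the claim that equality holds in the scalar form exactly when $ab=0$; summing then yields the functional inequality, with equality precisely when $f_ng_n=0$ for every $n$.

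For the scalar inequality I would first absorb the complex phases using the parallelogram law. Writing $u=|a+b|^2$ and $v=|a-b|^2$, one has $u+v=2(|a|^2+|b|^2)$, so that with $s=|a|^2$ and $t=|b|^2$ the target becomes $u^{p/2}+v^{p/2}\geq 2s^{p/2}+2t^{p/2}$ under the single constraint $\tfrac{u+v}{2}=s+t$. The argument then proceeds in two comparisons. Since $p>2$, the map $x\mapsto x^{p/2}$ is convex on $[0,\infty)$, so Jensen's inequality gives $u^{p/2}+v^{p/2}\geq 2\left(\tfrac{u+v}{2}\right)^{p/2}=2(s+t)^{p/2}$; and since $p/2>1$ the same map is superadditive, giving $(s+t)^{p/2}\geq s^{p/2}+t^{p/2}$. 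Chaining these yields $u^{p/2}+v^{p/2}\geq 2(s+t)^{p/2}\geq 2s^{p/2}+2t^{p/2}$, the desired inequality. For $p<2$ the map $x\mapsto x^{p/2}$ is instead concave and subadditive, so both comparisons reverse and we obtain the reverse inequality.

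The equality analysis is where I expect the only real subtlety. In the chain $A\geq B\geq C$ above, overall equality forces equality in both steps. Strict convexity of $x\mapsto x^{p/2}$ for $p>2$ makes the Jensen step an equality only when $u=v$, while strict superadditivity makes the second step an equality only when $s=0$ or $t=0$, i.e.\ $a=0$ or $b=0$. Since $a=0$ or $b=0$ already forces $u=v$, the binding condition is precisely $ab=0$; conversely, $ab=0$ gives equality throughout. Thus $ab\neq 0$ forces the second step to be strict, hence the whole chain strict, and equality in the scalar inequality holds exactly when $ab=0$. Summing over $n$, equality in the functional inequality holds if and only if $f_ng_n=0$ for every $n$, which is the statement $fg=0$ almost everywhere for the counting measure. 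The main obstacle is therefore not any individual estimate but the bookkeeping that certifies the equality condition is exactly $ab=0$ rather than something weaker; the two-step decomposition is designed precisely so that the superadditivity step isolates this condition cleanly.
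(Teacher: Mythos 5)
Your proof is correct, and it coincides with the argument underlying the result as the paper uses it: the paper itself offers no proof, quoting the inequality from Clarkson and from Lamperti (Corollary 2.1 of his paper), and Lamperti's own proof is exactly your reduction via the parallelogram law to convexity/superadditivity (resp.\ concavity/subadditivity) of $t\mapsto t^{p/2}$, with the strict super/subadditivity step carrying the equality condition $ab=0$. Nothing further is needed.
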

Of course, if $p=2$ the Parallelogram Identity gives $||f+g||^2+||f-g||^2=2||f||^2+2||g||^2$ for any $f,g\in \ell^p$ regardless of orthogonality. Hence we explicitly exclude the possibility of our exponent sequences having 2 as a value.

Considering the above Proposition, for $z,w\in\C$ and $p>2$ we have $|z+w|^p+|z-w|^p\geq 2|z|^p+2|w|^p$ (and the reverse for $p<2$) with equality if and only if either $z=0$ or $w=0$. Therefore if $(p_n)$ is such that $p_n\in(2,\infty)$ for all $n\in\N$ and given $a,b\in\lpn$ then
$$|a_n+b_n|\topn+|a_n-b_n|\topn\geq2|a_n|\topn+2|b_n|\topn$$
with equality if and only if $a_nb_n=0$. It follows that
$$\sum_{n\in\N}|a_n+b_n|\topn+\sum_{n\in\N}|a_n-b_n|\topn\geq2\sum_{n\in\N}|a_n|\topn+2\sum_{n\in\N}|b_n|\topn$$
with equality if and only if $a_nb_n=0$ for all $n\in\N$. From the definition we now have that
$$\varrho(a+b)+\varrho(a-b)\geq2\varrho(a)+2\varrho(b)$$
 with equality if and only if $ab=0$. The reverse inequality holds if $(p_n)$ is such that $p_n\in[1,2)$ for all $n\in\N$, including the strict equality condition. Taken together these observations result in the following lemma.

\begin{lemma} If $(p_n)$ is such that either $p_n\in[1,2)$ for all $n\in\N$ or $p_n\in(2,\infty)$ for all $n\in\N$, and $a,b\in\lpn$ then
$$\varrho(a+b)+\varrho(a-b)=2\varrho(a)+2\varrho(b)$$
if and only if $ab=0$.
\end{lemma}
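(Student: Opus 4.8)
The plan is to reduce the modular identity to a termwise application of the scalar Clarkson inequality and then promote the pointwise equality conditions to the full sequence. First I would record the scalar fact already isolated in the discussion above: for $z,w\in\C$ and a fixed exponent $p>2$ the inequality $|z+w|^p+|z-w|^p\geq 2|z|^p+2|w|^p$ holds (with the reverse inequality when $p<2$), and in either case equality occurs precisely when $zw=0$. Since the hypothesis forces every $p_n$ onto the same side of $2$, applying this with $z=a_n$, $w=b_n$, $p=p_n$ produces a family of scalar inequalities all oriented in the same direction.

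Before summing I would confirm that all four modular quantities are finite: $\varrho(a)$ and $\varrho(b)$ are finite because $a,b\in\lpn$, and $\varrho(a+b)$, $\varrho(a-b)$ are finite because $\lpn$ is a vector space. With finiteness secured, summing the termwise inequalities over $n$ yields $\varrho(a+b)+\varrho(a-b)\geq 2\varrho(a)+2\varrho(b)$ when $p_n\in(2,\infty)$, and the reverse inequality when $p_n\in[1,2)$.

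The crux is the equality clause. I would introduce the termwise defect
$$d_n=|a_n+b_n|\topn+|a_n-b_n|\topn-2|a_n|\topn-2|b_n|\topn,$$
which is nonnegative for every $n$ in the case $p_n>2$ (and nonpositive when $p_n<2$) by the scalar inequality, and whose sum equals $\varrho(a+b)+\varrho(a-b)-2\varrho(a)-2\varrho(b)$. The hypothesized identity is exactly the assertion $\sum_n d_n=0$. A sum of terms of one fixed sign can vanish only if every term vanishes, so $d_n=0$ for all $n$; the scalar equality condition then forces $a_nb_n=0$ for every $n$, i.e.\ $ab=0$. Conversely, if $ab=0$ then each $d_n=0$ termwise and the identity follows immediately by summation.

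I expect the only delicate point to be the forward direction's passage from $\sum_n d_n=0$ to the vanishing of each individual $d_n$; this step is clean precisely because the single-signedness of the exponents guarantees that the $d_n$ never change sign, so no cancellation between positive and negative contributions can mask a nonzero term. Everything else is routine bookkeeping with the definition of $\varrho$.
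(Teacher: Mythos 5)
Your proposal is correct and follows essentially the same route as the paper: the paper likewise applies the scalar Clarkson inequality termwise with $z=a_n$, $w=b_n$, $p=p_n$, sums over $n$, and uses the fact that all termwise inequalities point the same way to conclude that equality of the sums forces equality (hence $a_nb_n=0$) in every term. Your explicit defect sequence $d_n$ and the finiteness check are just slightly more careful bookkeeping of the same argument.
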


Recall that for the usual $\ell^p$ spaces the relation between the norm and modular is $||a||^p=\varrho(a)$ for all $a\in\ell^p$. Hence the Lemma is a direct generalization, to $\lpn$ spaces, of Lamperti's version of Clarkson's inequalities. Lamperti's primary use of Clarkson's inequalities was to conclude that isometric operators preserve orthogonal vectors. We find that isomodular operators, at least on spaces with restricted exponents, similarly preserve orthogonality.

\begin{proposition}\label{ortho} If $(p_n)$ is such that either $p_n\in[1,2)$ for all $n\in\N$ or $p_n\in(2,\infty)$ for all $n\in\N$, $S:\lpn \to\lpn$ is isomodular, and $a,b\in\lpn$ are such that $ab=0$ then $(Sa)(Sb)=0$ as well.
\end{proposition}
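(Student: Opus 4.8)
The plan is to leverage the preceding Lemma, which characterizes the modular parallelogram equality $\varrho(a+b)+\varrho(a-b)=2\varrho(a)+2\varrho(b)$ precisely by the disjointness condition $ab=0$. The essential point is that this Lemma is a genuine biconditional, so it may be deployed in both directions: the forward implication converts the hypothesis $ab=0$ into a modular identity for $a,b$, while the reverse implication extracts the desired disjointness $(Sa)(Sb)=0$ from an analogous modular identity for the images. Accordingly, I would first apply the ``if'' direction to the given pair: since $ab=0$, we obtain $\varrho(a+b)+\varrho(a-b)=2\varrho(a)+2\varrho(b)$.

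Next I would transport this identity to the images under $S$. Because $S$ is linear we have $S(a+b)=Sa+Sb$ and $S(a-b)=Sa-Sb$, and because $S$ is isomodular we have $\varrho(Sx)=\varrho(x)$ for each of the four relevant vectors $x\in\{a+b,\,a-b,\,a,\,b\}$. Substituting these equalities into the identity from the first step yields
\[\varrho(Sa+Sb)+\varrho(Sa-Sb)=2\varrho(Sa)+2\varrho(Sb).\]

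Finally, since $Sa$ and $Sb$ again lie in $\lpn$, I would invoke the ``only if'' direction of the Lemma applied to the pair $Sa,Sb$: the modular parallelogram equality just obtained forces $(Sa)(Sb)=0$, which is precisely the conclusion. The argument is entirely mechanical and presents no genuine obstacle; the only subtlety worth flagging is that both implications of the Lemma are required, and that it is exactly the isomodular hypothesis---rather than mere isometry---that licenses the clean substitution $\varrho(Sx)=\varrho(x)$ in the middle step.
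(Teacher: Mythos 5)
Your proof is correct and follows exactly the paper's own argument: apply the Lemma's forward direction to $ab=0$, use linearity and the isomodular property of $S$ to transfer the parallelogram identity to $Sa,Sb$, then apply the Lemma's reverse direction to conclude $(Sa)(Sb)=0$. Your observation that both directions of the biconditional are needed, and that isomodularity (not mere isometry) is what makes the substitution work, matches the paper's reasoning precisely.
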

\begin{proof} Since $S$ is isomodular we have that $\varrho(Sx)=\varrho(x)$ for all $x\in\lpn$. It follows, using the previous Lemma, that
$$\varrho(Sa+Sb)+\varrho(Sa-Sb)=\varrho(a+b)+\varrho(a-b)=2\varrho(a)+2\varrho(b)=2\varrho(Sa)+2\varrho(Sb).$$
Using the previous Lemma again we immediately conclude that $(Sa)(Sb)=0$.
\end{proof}

The basis elements $e_n\in\lpn$ are, naturally, orthogonal, and so by the above proposition their images $Se_n$ under an isomodular operator are orthogonal as well. It follows that the support sets
$$\operatorname{support}(Se_n):=\{k\in\N\ :\ (Se_n)_k\not=0\}$$
must be pairwise disjoint. 

\begin{proposition} If $(p_n)$ is such that either $p_n\in[1,2)$ for all $n\in\N$ or $p_n\in(2,\infty)$ for all $n\in\N$ and if $S:\lpn\to\lpn$ is an isomodular operator then the assignments
$$T:\{n\}\mapsto\operatorname{support}(Se_n)$$
extend to a regular set isomorphism $T$ of $\N$. 
\end{proposition}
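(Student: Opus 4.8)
The plan is to define $T$ on all of $\mathcal{P}(\N)$ by the natural extension $TA:=\bigcup_{n\in A}\operatorname{support}(Se_n)$ (with $T\emptyset=\emptyset$) and then verify directly the three axioms of Definition \ref{regsetiso}. The preceding discussion has already delivered the decisive structural fact — that the sets $\operatorname{support}(Se_n)$ are pairwise disjoint — so the task reduces to showing that this disjointness, together with one further observation about non-emptiness, forces each of conditions (i)--(iii). (The correspondence noted after Definition \ref{regsetiso} essentially asserts the result, but a direct verification is preferable here because it makes the role of non-emptiness in condition (iii) explicit.)

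First I would dispose of the two easier axioms. Condition (ii) is purely formal: since $T$ is defined as a union indexed over the elements of its argument, distributivity of union over union gives
\[T\!\left(\bigcup_{m=1}^\infty A_m\right)=\bigcup_{n\in\bigcup_m A_m}\operatorname{support}(Se_n)=\bigcup_{m=1}^\infty\bigcup_{n\in A_m}\operatorname{support}(Se_n)=\bigcup_{m=1}^\infty TA_m,\]
with no appeal to disjointness. For condition (iii), the nontrivial direction requires that each support set be non-empty, and this is exactly where isomodularity enters: since $\varrho(Se_n)=\varrho(e_n)=1\neq0$, we have $Se_n\neq0$ and hence $\operatorname{support}(Se_n)\neq\emptyset$. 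Thus if $A\neq\emptyset$ we may choose $n\in A$ and conclude $\emptyset\neq\operatorname{support}(Se_n)\subseteq TA$, while $T\emptyset=\emptyset$ handles the converse.

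The main obstacle is condition (i), $T(\N-A)=T\N-TA$, and this is the step that genuinely consumes the pairwise disjointness. I would argue by double inclusion. For ``$\subseteq$'', a point $k\in\operatorname{support}(Se_n)$ with $n\notin A$ clearly lies in $T\N$; were it also to lie in some $\operatorname{support}(Se_m)$ with $m\in A$, then $n\neq m$ would contradict disjointness, so $k\notin TA$. For ``$\supseteq$'', a point $k\in T\N-TA$ lies in some $\operatorname{support}(Se_n)$ but in none of the supports indexed by $A$; disjointness again forces the unique index $n$ with $k\in\operatorname{support}(Se_n)$ to satisfy $n\notin A$, whence $k\in T(\N-A)$. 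The essential point — and the reason disjointness cannot be dispensed with — is that without it the subtraction of $TA$ from $T\N$ could strip away parts of supports belonging to indices outside $A$, breaking the equality. Having verified (i)--(iii), $T$ is a regular set isomorphism extending the given assignment, as claimed.
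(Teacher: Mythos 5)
Your proof is correct and is essentially the paper's own argument: the paper states this proposition without a separate proof, relying on the pairwise disjointness of the sets $\operatorname{support}(Se_n)$ (established just beforehand via Proposition \ref{ortho}) together with the correspondence remarked after Definition \ref{regsetiso}, and that correspondence is exactly the natural extension $TA=\bigcup_{n\in A}\operatorname{support}(Se_n)$ whose axioms you verify. Your one genuine addition is making explicit that condition (iii) needs each support to be non-empty, which you correctly extract from isomodularity via $\varrho(Se_n)=\varrho(e_n)=1\neq 0$; the paper's correspondence remark silently assumes this, so spelling it out is a worthwhile, if small, improvement.
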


Recall from the discussion following Definition \ref{regsetiso} that if  $T$ is a regular set isomorphism of $\N$ then it extends to a map (also called $T$) from $\lpn$ to $\ell^\infty$.

We now have a sufficient preparation to state and prove our main result:

\begin{theorem}\label{mylampthm} If $(p_n)$ is such that either $p_n\in[1,2)$ for all $n\in\N$ or $p_n\in(2,\infty)$ for all $n\in\N$, and if $S:\lpn\to\lpn$ is an isomodular operator then there exists a regular set isomorphism $T$ of $\N$ and a function $h\in\ell^\infty$ such that
$$(Sx)_n=h_n(Tx)_n$$
for all $x\in\lpn$ and $n\in\N$. Furthermore, $|h(n)|\leq1$ for all $n\in\N$.
\end{theorem}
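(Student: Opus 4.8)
The plan is to build everything from the action of $S$ on the basis vectors $e_k$, leveraging the regular set isomorphism $T$ furnished by the preceding proposition, for which $T\{k\}=\operatorname{support}(Se_k)$. Since $S$ is isometric it is bounded, hence continuous, and every $x\in\lpn$ expands as $x=\sum_k x_k e_k$ with convergence in norm; applying $S$ and using its continuity together with the continuity of the coordinate functionals $y\mapsto y_n$ on $\lpn$, I would conclude that $(Sx)_n=\sum_k x_k(Se_k)_n$ for each fixed $n$.

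With this in hand I would define $h$ by reading off the images of the basis vectors: for $n\in T\{k\}$ set $h_n:=(Se_k)_n$, and set $h_n:=0$ for $n\notin\bigcup_k T\{k\}=T\N$. This is unambiguous precisely because the support sets $T\{k\}$ are pairwise disjoint (a consequence of Proposition \ref{ortho}), so each $n$ lies in at most one $T\{k\}$. To verify the formula $(Sx)_n=h_n(Tx)_n$ I would fix $n$ and distinguish two cases. If $n\notin T\N$ then $(Tx)_n=0$ by the definition of $T$ on sequences, while $(Sx)_n=0$ since each $Se_k$ vanishes off $T\N$, so both sides are zero. If $n\in T\{k\}$ for the unique such $k$, then disjointness forces $(Se_j)_n=0$ for all $j\neq k$, the expansion collapses to $(Sx)_n=x_k(Se_k)_n=h_n x_k$, and since $(Tx)_n=x_k$ by the definition of the extension of $T$ to $\lpn$, the two sides again agree.

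It then remains to bound $h$, which is where isomodularity does its work. Applying the isomodular identity to the basis vector $e_k$ gives $\sum_{n\in T\{k\}}|h_n|^{p_n}=\varrho(Se_k)=\varrho(e_k)=1$, and since every summand is nonnegative each individual term satisfies $|h_n|^{p_n}\leq 1$, whence $|h_n|\leq 1$ because $p_n>0$; for $n\notin T\N$ the bound is trivial. This simultaneously establishes $|h(n)|\leq 1$ for all $n$ and membership $h\in\ell^\infty$. I expect the only genuinely delicate point to be the justification of the coordinatewise expansion $(Sx)_n=\sum_k x_k(Se_k)_n$, that is, commuting $S$ past the infinite sum, which rests on the continuity of $S$ and of the coordinate evaluations; once that interchange is secured, the construction of $h$ and the modular bound are essentially immediate.
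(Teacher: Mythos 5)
Your proof is correct and takes essentially the same approach as the paper: the same regular set isomorphism $T$ from the preceding proposition, the same multiplier $h$ (your piecewise definition coincides with the paper's $h=\sum_m Se_m$, which is well-defined precisely because the supports $\operatorname{support}(Se_m)$ are pairwise disjoint), verification of the identity on the basis vectors, and extension to all of $\lpn$ via continuity together with density of finitely supported sequences. The only differences are cosmetic: you route the limiting argument through the continuity of $S$ and of the coordinate functionals rather than through the continuity of $T:\lpn\to\ell^\infty$, and you obtain $|h_n|\leq 1$ directly from the modular identity $\varrho(Se_k)=\varrho(e_k)=1$ rather than from $||Se_k||=1$.
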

\begin{proof}
Define the regular set isomorphism $T$ as in the previous Proposition. Then the extension $T:\lpn \to\ell^\infty$ is given by
$$(Ta)_n=\begin{cases} a_k &\text{ when }n\in \operatorname{support}(Se_k)\\ 0 &\text{ otherwise }\end{cases}$$
where the values are well-defined precisely because the support sets are pairwise disjoint.

Consider $a,b\in\lpn$. Then each term of $T(a-b)$ is either zero or a difference of like terms, $(Ta-Tb)_n=a_k-b_k$, $n\in\operatorname{support}(Se_k)$. It follows that $||T(a-b)||_\infty\leq ||a-b||_\infty\leq||a-b||$. Hence $T:\lpn\to\ell^\infty$ is continuous.

In particular, $(Te_m)_n=1$ if $n\in\operatorname{support}Se_m$ and 0 otherwise, i.e.\ $Te_m$ is the indicator sequence for the support set of $Se_m$. Hence for any $m\in\N$ we have $Se_m=Se_mTe_m$.

Consider that $||e_m||=1$ for each $m\in\N$, and so by Proposition \ref{isomodimpliesisomet} $||Se_m||= 1$. Defining $h:=\sum Se_m$ we conclude that, because the $Se_m$ have pairwise disjoint support, $h\in\ell^\infty$. Further, for each $n\in\N$ we have that $|h(n)|=|(Se_m)_n|\leq ||Se_m||=1$ for some $m\in\N$.  

Consider that for given $k\in\N$ we have
$$Se_m=Se_mTe_m=\sum_{m\in\N}Se_mTe_m=hTe_m.$$
By linearity this extends to sequences $a\in\lpn$ with finite support. Such elements are dense in $\lpn$ \cite[Corollary 3.4.10]{diening} and so the conclusion follows by the continuity of $T$. 
\end{proof}

Our theorem precisely recovers Lamperti's result in the case when $(p_n)$ is constant, and evokes a result \cite{felmingjamison}[Theorem 9.2.12] for \emph{surjective} isometries on a wider class or sequence spaces.

\section{Operators Induced by Transformations of $\N$}

We have already seen that an isomodular operator is necessarily isometric. Since isomodular operators on variable-exponent spaces with restricted exponents are now known, per Theorem \ref{mylampthm}, to have a determined structure, so too do a certain class of isometries. The goal for our current section is to understand the structure of another potential class of isometries: shift operators.

The unilateral shift $S:\C^\infty\to\C^\infty$ is defined for each $a\in\C^\infty$ by $(Sa)_1=0$ and $(Sa)_{n+1}=a_n$ for $n\in\N$. For a general variable-exponent space $\lpn\subset\C^\infty$ there is no guarantee that $S$ is a bounded operator, or even that $Sa\in\lpn$ when $a\in\lpn$. 

\begin{example}\label{example1} Take $(p_n)=(1,2,1,2,...)$ and $a=(0,1,0,\frac12,0,\frac13,...)$. Then $a\in\lpn$ but $Sa$ is not.
\end{example}
If $\Gamma:\N\to\N$ is a bijection then define a linear operator $S_\Gamma:\C^\infty\to\C^\infty$ where for each $a\in\C^\infty$ we have $(S_\Gamma a)_n=a_{\Gamma^{-1}(n)}$. When acting on a classical $\ell^p\subset\C^\infty$ these $S_\Gamma$ operators are isometric isomorphisms. However, there is no such guarantee when they act on a variable-exponent space.

\begin{example} Take $(p_n)$ and $a\in\lpn$ as in Example \ref{example1}. Set $\Gamma(n)=n-(-1)^n$. Then $S_\Gamma a\not\in\lpn$. 
\end{example}

Generalizing both previous concepts, consider operators $S_\theta:\C^\infty\to\C^\infty$, related to injective maps $\theta:\N\to\N$, which are defined by
$$(S_\theta a)_n:=\begin{cases}a_{\theta^{-1}(n)} & n\in\theta(\N) \\ 0 & \text{otherwise}\end{cases}$$
Notice that $S_\theta e_n=e_{\theta(n)}$ for each $n\in\N$; this could also be considered the defining feature of $S_\theta$. 

\begin{theorem}\label{shiftisometry} For given $\theta$ and $(p_n)$, the operator $S_\theta$ restricts to an isometry on $\lpn$ if and only if $p_n=p_{\theta(n)}$ for every $n\in\N$.
\end{theorem}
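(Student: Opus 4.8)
The plan is to prove the two implications separately, with the ``if'' direction being a quick consequence of earlier results and the ``only if'' direction carrying the real content. For the ``if'' direction, assume $p_n = p_{\theta(n)}$ for all $n$. I would compute the modular of $S_\theta a$ directly: since $\theta$ is injective and $(S_\theta a)_n = a_{\theta^{-1}(n)}$ for $n\in\theta(\N)$ and $0$ otherwise, reindexing by $m=\theta^{-1}(n)$ gives
\[
\varrho(S_\theta a)=\sum_{n\in\theta(\N)}|a_{\theta^{-1}(n)}|^{p_n}=\sum_{m\in\N}|a_m|^{p_{\theta(m)}}=\sum_{m\in\N}|a_m|^{p_m}=\varrho(a),
\]
the last equality using the hypothesis. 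In particular $\varrho(S_\theta a)=\varrho(a)<\infty$ shows $S_\theta a\in\lpn$, so $S_\theta$ genuinely restricts to $\lpn$, and moreover $S_\theta$ is isomodular; by Proposition \ref{isomodimpliesisomet} it is therefore isometric.

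For the converse I would first record the useful fact that for $a\neq 0$ the map $\lambda\mapsto\varrho(a/\lambda)$ is continuous and strictly decreasing from $+\infty$ to $0$, so that $\|a\|=1$ if and only if $\varrho(a)=1$; that is, the unit sphere coincides with the modular level set $\{\varrho=1\}$. Assuming $S_\theta$ is an isometry, any $a$ with $\varrho(a)=1$ then satisfies $\varrho(S_\theta a)=1$ as well. The obstacle here is that single basis elements are insufficient: $\|c\,e_m\|=|c|$ independently of $p_m$, so the exponents only become visible on vectors whose support meets two indices. Accordingly, fix distinct $m,k$ and test the one-parameter family
\[
a(s)=s^{1/p_m}\,e_m+(1-s)^{1/p_k}\,e_k,\qquad s\in[0,1],
\]
which is engineered so that $\varrho(a(s))=s+(1-s)=1$ for every $s$. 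Applying the level-set observation to $S_\theta a(s)$ and using $\varrho(S_\theta a(s))=s^{p_{\theta(m)}/p_m}+(1-s)^{p_{\theta(k)}/p_k}$ then yields the functional equation
\[
s^{\alpha}+(1-s)^{\beta}=1\quad\text{for all }s\in[0,1],
\]
where $\alpha=p_{\theta(m)}/p_m$ and $\beta=p_{\theta(k)}/p_k$.

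The final step is to show this functional equation forces $\alpha=\beta=1$. Expanding near $s=0$ gives $s^{\alpha}\approx\beta s$, i.e.\ $s^{\alpha-1}\to\beta$ as $s\to 0^+$; since $\beta$ is a fixed positive constant this is possible only when $\alpha=1$, and then $s+(1-s)^{\beta}=1$ forces $\beta=1$ (e.g.\ evaluate at $s=1/2$). Hence $p_{\theta(m)}=p_m$, and since $m$ was an arbitrary index (with $k$ any other index) the conclusion $p_n=p_{\theta(n)}$ holds for all $n$. I expect the main subtlety to be essentially bookkeeping: justifying the level-set characterization of the norm and confirming that the test vectors and their images all lie in $\lpn$ (automatic, as they have finite support), after which the functional-equation argument is elementary.
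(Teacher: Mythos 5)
Your proof is correct, and while it shares the paper's basic strategy---testing the isometry on two-point-supported vectors whose modular equals $1$ and converting norm information back into modular information---your execution of the ``only if'' direction is genuinely different and, in fact, more robust than the paper's. The paper tests the single vector $b=2^{-1/p_j}e_j+2^{-1/p_{\theta(j)}}e_{\theta(j)}$ (essentially your $a(s)$ at $s=\tfrac12$ with $k=\theta(j)$), derives the single equation $2^{-p_{\theta(j)}/p_j}+2^{-p_{\theta(\theta(j))}/p_{\theta(j)}}=1$, and then asserts that $2^x+2^y=1$ has the unique real solution $x=y=-1$. That assertion is false (e.g.\ $2^{-2}+2^{\log_2(3/4)}=1$), so a single equation in two unknown exponent ratios cannot pin them down; your one-parameter family $a(s)$ instead yields the identity $s^{\alpha}+(1-s)^{\beta}=1$ for \emph{all} $s\in[0,1]$, and your asymptotic analysis as $s\to0^+$ legitimately forces $\alpha=\beta=1$. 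Your version therefore repairs a gap in the published argument. Likewise your ``if'' direction, which exhibits $S_\theta$ as isomodular by reindexing and then invokes Proposition \ref{isomodimpliesisomet}, is complete, whereas the paper's corresponding passage trails off after writing down the definition of the norm.

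One caution: your preliminary ``level-set'' claim---that for every $a\neq0$ the map $\lambda\mapsto\varrho(a/\lambda)$ is continuous and strictly decreasing from $+\infty$ to $0$, so that $\|a\|=1$ iff $\varrho(a)=1$---is false for general $a$ when $(p_n)$ is unbounded, a case the paper explicitly allows and which Theorem \ref{shiftisometry} does not exclude. For example, with $p_n=n$ and $|a_n|^{p_n}=\tfrac{3}{\pi^2}n^{-2}$ one has $\varrho(a)=\tfrac12$ yet $\varrho(a/\lambda)=\infty$ for every $\lambda<1$, so $\|a\|=1$ while $\varrho(a)\neq1$; the function $\lambda\mapsto\varrho(a/\lambda)$ can jump down from $+\infty$. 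You should state the lemma only for finitely supported $a$, where the modular is a finite sum of continuous, strictly decreasing functions of $\lambda$ and the claim is true; since your test vectors $a(s)$ and their images $S_\theta a(s)$ all have finite support, this restriction costs you nothing and the rest of the proof stands as written.
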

\begin{proof}
For necessity, let $a\in\lpn$ be given and suppose that $p_{\theta(n)}=p_n$ for each $n\in\N$. Consider that
$$||S_\theta a||=\inf_{\theta>0}\left\{\sum_{n\in\N}\left|\frac{(S_\theta a)_n}{\lambda}\right|^{p_n}\leq 1\right\}$$
For sufficiency the claim is trivial if $\theta$ is the identity mapping, so let $j\in\N$ be such that $\theta(j)\not=j$.

Consider the sequence $b:=2^{-\frac1{p_j}}e_j+2^{-\frac1{p_{\theta(j)}}}e_{\theta(j)}$. By definition we have that 
$$||b||=\inf\left\{\lambda>0\ :\ \frac{\lambda^{-p_j}}{2}+\frac{\lambda^{-p_{\theta(j)}}}{2}\leq 1\right\}.$$
As before, it must be that $||b||$ satisfies $\frac{||b||^{-p_j}}{2}+\frac{||b||^{-p_{\theta(j)}}}{2}= 1$, however since $p_j,p_{\theta(j)}\in[1,\infty)$ we conclude this is only possible when $||b||=1$. 

If $||S_\theta b||=||b||=1$ then $\lambda =1$ is a solution to
$$\left(\frac{2^{-\frac1{p_{j}}}}{\lambda}\right)^{p_{\theta(j)}}+\left(\frac{2^{-\frac1{p_{\theta(j)}}}}{\lambda}\right)^{p_{\theta(\theta(j))}}=1$$
thus
$$2^{-\frac{p_{\theta(j)}}{p_j}}+2^{-\frac{p_{\theta(\theta(j))}}{p_{\theta(j)}}}=1.$$
Since the equation $2^x+2^y=1$ has the unique (real) solution $x=y=-1$ we must conclude that $p_j=p_{\theta(j)}=p_{\theta(\theta(j))}$.
\end{proof}

Our Theorem partially recalls with the result in \cite{skorik} where it is shown that the bijective isometries of real-valued analogs to $\lpn$ are precisely obtained from entry-value-preserving permutations of $(p_n)$ together with sign-changes. 

The injection $\theta(n):=n+1$ yields $S_\theta=S$ the unilateral shift; and so we have the following:

\begin{corollary} The unilateral shift is isometric on $\lpn$ if and only if $(p_n)$ is constant.
\end{corollary}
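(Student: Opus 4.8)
The plan is to treat this statement as a direct corollary of Theorem~\ref{shiftisometry}, since the unilateral shift $S$ is precisely $S_\theta$ for the injection $\theta(n)=n+1$, as noted in the sentence immediately preceding the statement. The entire argument therefore reduces to specializing the characterization $p_n=p_{\theta(n)}$ to this particular $\theta$ and then recognizing what that condition says about the sequence $(p_n)$.

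First I would verify that $\theta(n)=n+1$ is indeed an injection $\N\to\N$, so that Theorem~\ref{shiftisometry} applies without modification. Its conclusion then reads directly: $S=S_\theta$ restricts to an isometry on $\lpn$ if and only if $p_n=p_{\theta(n)}=p_{n+1}$ for every $n\in\N$. This is the only place the substantive content of the earlier theorem is invoked.

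The remaining step is to show that the condition ``$p_n=p_{n+1}$ for all $n\in\N$'' is equivalent to the assertion that $(p_n)$ is constant. The reverse implication is immediate, since a constant sequence trivially satisfies every consecutive equality. For the forward implication I would chain the equalities by a routine induction: from $p_1=p_2$, $p_2=p_3$, and so forth, one obtains $p_n=p_1$ for every $n\in\N$, which is exactly the statement that $(p_n)$ is constant.

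There is no genuine obstacle here, as the force of the result is carried entirely by Theorem~\ref{shiftisometry}. The one conceptual point worth highlighting is \emph{why} the shift in particular forces global constancy while a general injection $\theta$ need not: the map $\theta(n)=n+1$ links every index back to $1$ through successive increments, so the pointwise constraints $p_n=p_{n+1}$ propagate across all of $\N$ and collapse the sequence to a single value. For an arbitrary injection the orbit structure of $\theta$ could partition $\N$ into several disjoint chains, on each of which $(p_n)$ must be constant but across which it may freely differ; it is precisely the connectedness of the successor map that upgrades the local condition to global constancy.
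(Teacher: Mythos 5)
Your proof is correct and follows exactly the paper's route: the paper likewise obtains the corollary by specializing Theorem~\ref{shiftisometry} to the injection $\theta(n)=n+1$, whereupon the condition $p_n=p_{\theta(n)}$ becomes $p_n=p_{n+1}$ for all $n$ and hence constancy of $(p_n)$. Your added remark about orbit structure of general injections is a nice observation but not needed for the argument.
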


Similarly we find any multiple of the shift, $S^k$, is isometric on $\lpn$ if and only if $(p_n)$ is periodic and the period divides $k$.

\end{document}